\theoremstyle{plain}
\newtheorem{thm}{Theorem}[section]
\newtheorem{claim}[thm]{Claim}
\newtheorem{theorem}{Theorem}
\numberwithin{theorem}{section}
\numberwithin{lemma}{section}
\numberwithin{corollary}{section}
\newtheorem{conjecture}{Conjecture}
\numberwithin{conjecture}{section}
\newtheorem*{conjecture*}{Conjecture}
\title{Enhancing the Erd\H{o}s-Lov\'asz Tihany Conjecture for line graphs of multigraphs}
\author{Yue Wang$^{1}$,  Gexin Yu$^{2}$}
\address{
$^{1}$\small School of Mathematics, Shandong University, Jinan, Shandong, China.\\
$^{2}$\small Department of Mathematics, William \& Mary, Williamsburg, VA, USA.
}
\thanks{The work was done while the first author was at William \& Mary as a visiting student, partially  supported by the Chinese Scholarship Council.  The research of the last author was supported in part by a summer research grant from William \& Mary.}
\email{m15064013175@163.com(Y. Wang), gyu@wm.edu}
\begin{document}

\maketitle

\begin{abstract}
In this paper, we prove an enhanced version of the Erd\H{o}s-Lov\'asz Tihany Conjecture for line graphs of multigraphs. That is, for every line graph $G$ whose chromatic number $\chi(G)$ is more than its clique number $\omega(G)$ and for any nonnegative integer $\ell$, any two integers $s,t \geq 3.5\ell+2$ with $s+t = \chi(G)+1$, there is a partition $(S,T)$ of the vertex set $V(G)$ such that $\chi(G[S])\geq s$ and $\chi(G[T])\geq t+\ell$. In particular, when $\ell=1$, we can obtain the same result just for any $s,t\geq4$. The Erd\H{o}s-Lov\'asz Tihany conjecture for line graphs is a special case when $\ell=0$.
\end{abstract}

\section{Introduction}\label{sect1}

For a (multi)graph $G=(V,E)$, let the {\em clique number} $\omega(G)$ be the size of largest clique in $G$ and the {\em chromatic number} $\chi(G)$ be the smallest integer $k$ such that $V(G)$ can be partitioned into $k$ independent sets.  For a subset $S\subseteq V(G)$, let $G[S]$ be the induced subgraph of $G$ by $S$.  In 1968, Erd\H{o}s and Lov\'asz~\cite{EL68} made the following famous conjecture:

\begin{conjecture}\label{con1}\emph{(Erd\H{o}s-Lov\'asz Tihany Conjecture)}.
For every graph $G$ with $\chi(G) > \omega(G)$ and any two integers $s,t\geq 2$ with $s+t =\chi(G)+1$, there is a partition $(S,T)$ of the vertex set $V(G)$ such that $\chi(G[S])\geq s$ and $\chi(G[T])\geq t$.
\end{conjecture}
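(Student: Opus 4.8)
The plan is to argue from a minimal counterexample and then to reduce the statement to a purely extremal one about how chromatic number splits over a bipartition. Suppose the conjecture fails, and among all counterexamples choose $G$ with $|V(G)|$ minimum; fix $s,t\ge 2$ with $s+t=\chi(G)+1=:k+1$ and $\omega(G)<k$. Minimality forces $G$ to be \emph{vertex-critical}: if some $v$ had $\chi(G-v)=k$, then $G-v$ would still satisfy $\chi(G-v)=k>\omega(G-v)$ and, since any $(s,t)$-partition of $G-v$ extends to $G$ by appending $v$ to either side, it would be a smaller counterexample. Hence $\chi(G-v)=k-1$ for all $v$, so $\delta(G)\ge k-1=s+t-2$, and $G$ is $2$-connected; moreover, using Gallai's theorem that $k$-critical graphs have no clique cutset, I may assume $G$ has none (otherwise recurse on the blocks). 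The first step is thus to harvest these standard consequences of $k$-criticality to gain structural traction.

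The second step is to reframe the goal. For \emph{any} partition $(S,T)$ one has the trivial superadditivity $\chi(G[S])+\chi(G[T])\ge\chi(G)=k$, obtained by concatenating optimal colorings of the two parts on disjoint color sets. The conjecture asks for a partition in which this sum reaches $s+t=k+1$ and is split as $\chi(G[S])\ge s$, $\chi(G[T])\ge t$. So it suffices to produce a single \emph{strictly} superadditive partition with the prescribed proportions, and the hypothesis $\chi(G)>\omega(G)$ must be the lever ruling out the degenerate situation in which every bipartition merely achieves equality.

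To locate the right partition I would combine an extremal choice with a discrete continuity argument. Take a subgraph $S_0$ that is vertex-critical with $\chi(G[S_0])=s$ (it exists since $\chi(G)=k\ge s$), put $T_0=V\setminus S_0$, and suppose for contradiction $\chi(G[T_0])\le t-1$. Gluing an $s$-coloring of $S_0$ to a $(t-1)$-coloring of $T_0$ yields a $k$-coloring of $G$, which is not yet a contradiction: the obstruction is \emph{exactly one color}. I would then try to save that color by exhibiting an independent set of $G$ that meets a color class on each side of the cut and can be merged across it, producing a $(k-1)$-coloring and contradicting $\chi(G)=k$. Equivalently, I would slide vertices between $S$ and $T$ one at a time, tracking the pair $(\chi(G[S]),\chi(G[T]))$; since a single move changes each coordinate by at most one, a discrete intermediate-value argument carries us from a partition realizing the full sum on one side toward the balanced target $(s,t)$, provided the sum never collapses back to $k$ along the way. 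The role of $\omega(G)<k$ is to guarantee the slack that keeps the sum at $k+1$: in a $k$-critical graph with no $k$-clique there should always be a non-edge inside some color class, or a Kempe chain not spanning a clique, furnishing the merge.

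The step I expect to be the genuine obstacle is precisely this color-saving/strict-superadditivity claim: converting "$\chi(G)>\omega(G)$" into a concrete independent set or Kempe exchange across $(S,T)$. Controlling $\chi(G[T])$ after a minimal choice of $S$ requires understanding exactly when $\chi(G)=\chi(G[S])+\chi(G[T])$ holds with equality for \emph{all} bipartitions, and for arbitrary vertex-critical graphs no such structural dichotomy is available—the "tight" graphs need not be joins or otherwise decomposable, so the merge has no guaranteed home. I anticipate that without additional structure (perfectness, a forbidden minor, or, as is natural here, the line-graph/quasi-line structure, where Kempe chains and augmenting paths are governed by the underlying multigraph and Vizing-type edge-coloring slack supplies exactly the missing color) this final merge cannot be forced, so the realistic route to the stated bound is to establish it within such a restricted class rather than for every graph $G$.
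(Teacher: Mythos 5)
This statement is Conjecture~\ref{con1} itself: it is an \emph{open problem}, and the paper contains no proof of it --- it only proves Theorems~\ref{th1} and~\ref{th2}, which establish an enhanced version for the special class of line graphs of multigraphs via edge-coloring arguments (choosing a set $S_v$ of $s$ edges at a maximum-degree vertex, extending a partial coloring using Hall's theorem on an auxiliary bipartite graph, and Kempe interchanges in the underlying multigraph). So there is no ``paper proof'' to match, and a correct blind proof would resolve a fifty-year-old conjecture.

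Your attempt has a genuine gap, which, to your credit, you identify yourself. The preliminary steps are fine but carry no force: vertex-criticality of a minimum counterexample, $\delta(G)\ge s+t-2$, and the superadditivity $\chi(G[S])+\chi(G[T])\ge\chi(G)$ are all standard. The entire content of the conjecture sits in your third step, and there you only assert what must be proved: starting from an $s$-critical $S_0$ with $\chi(G[T_0])\le t-1$, you need to convert the hypothesis $\chi(G)>\omega(G)$ into a concrete recoloring --- a cross-cut Kempe exchange or a mergeable pair of color classes --- yielding a $(k-1)$-coloring of $G$. Your claim that ``there should always be a non-edge inside some color class, or a Kempe chain not spanning a clique, furnishing the merge'' is given no argument, and no such argument is known; this implication \emph{is} the open problem, settled only for $(s,t)\in\{(2,2),(2,3),(2,4),(3,3),(3,4),(3,5)\}$ and for special classes such as line graphs, quasi-line graphs, and graphs of independence number two. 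The discrete intermediate-value sliding argument also fails as stated: moving a single vertex can collapse the sum $\chi(G[S])+\chi(G[T])$ back to $k$, and nothing rules out that every bipartition in the relevant range is tight --- there is no structural dichotomy characterizing when $\chi(G)=\chi(G[S])+\chi(G[T])$ for all bipartitions of a vertex-critical graph. Your closing diagnosis is accurate: the additional structure of line graphs is exactly what the paper exploits (Shannon's bound $\lfloor 3\Delta/2\rfloor\ge\chi'$, the clique structure $\omega(L(G))=\max\{\tau(G),\Delta(G)\}$, and Kempe chains in the underlying multigraph supply the ``missing color''), and no analogue of this machinery exists for arbitrary graphs.
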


The only settled cases of this conjecture are $(s,t)\in\{(2,2),(2,3),(2,4),(3,3),(3,4),(3,5)\},$  see \cite{BJ99, M86, S87a, S87b}. This conjecture is also known to be true for some special classes of graphs, such as line graphs of multigraphs (Kostochka and Stiebitz \cite{KS08}), quasi-line graphs, graphs with independence number two (Balogh, Kostochka, Prince and Stiebitz~\cite{BKPS09}) and graphs with forbidden holes (Song~\cite{S19}).

More recently, the following two relaxed version of Conjecture \ref{con1} were proved. For every claw-free graph $G$ with $\chi(G)>\omega(G)$, there exists a clique $K$ with $|V(K)|\le5$ such that $\chi(G\backslash V(K))>\chi(G)-|V(K)|$ (Chudnovsky, Fradki and Plumettaz~\cite{CFP}). And for integers $s,t\ge2$, any graph $G$ with $\omega(G)<\chi(G)=s+t-1$ contains disjoint subgraphs $G_1$ and $G_2$ of $G$ with either $\chi(G_1)\ge s$ and $col(G_2)\ge t$, or $col(G_1)\ge s$ and $\chi(G_2)\ge t$, where $col(H)$ denotes the coloring number of a graph $H$ (Stiebitz~\cite{S17}).

A connected graph $G$ is {\em double-critical} if $\chi(G)=t$ but $\chi(G\backslash\{x,y\})=t-2$ for every edge $xy\in E(G)$.  The following well-known conjecture is the case of $s=2$ of Conjecture~\ref{con1}.

\begin{conjecture}\label{con2}\emph{(Double-Critical Graph Conjecture~\cite{EL68})}.
For $t\ge 3$, the only double-critical $t$-chromatic graph is $K_t$.
\end{conjecture}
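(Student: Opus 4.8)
The plan is to derive Conjecture~\ref{con2} from the $s=2$ instance of the Erd\H{o}s--Lov\'asz Tihany conjecture (Conjecture~\ref{con1}), which is precisely the regime covered by the main theorem of this paper in the case $\ell=0$, and which for line graphs of multigraphs was established by Kostochka and Stiebitz~\cite{KS08}. Suppose $G$ is a double-critical $t$-chromatic graph with $t\ge 3$ and $G\neq K_t$; I aim for a contradiction. The first step is to reduce to the case $\chi(G)>\omega(G)$. Since $\omega(G)\le\chi(G)=t$, the only other possibility is $\omega(G)=t$, so $G$ contains a copy $K$ of $K_t$. If $V(G)\neq V(K)$, then (double-critical graphs being connected) there is a vertex $v\notin V(K)$ with a neighbor $u\in V(K)$; but $K-u\cong K_{t-1}$ survives in $G-v-u$, so $\chi(G-v-u)\ge t-1>t-2$, contradicting double-criticality at the edge $vu$. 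Hence $V(G)=V(K)$ and $G=K_t$, contrary to assumption. Therefore $\chi(G)>\omega(G)$ and Conjecture~\ref{con1} applies.

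Now apply Conjecture~\ref{con1} with $s=2$ and $t'=t-1$ (so $s+t'=\chi(G)+1$ and $s,t'\ge 2$ since $t\ge 3$). This yields a partition $(S,T)$ of $V(G)$ with $\chi(G[S])\ge 2$ and $\chi(G[T])\ge t-1$. Because $\chi(G[S])\ge 2$, the set $S$ contains an edge $xy$, and because $T\subseteq V(G)\setminus\{x,y\}$,
\[
\chi(G-x-y)\;\ge\;\chi(G[T])\;\ge\;t-1\;>\;t-2,
\]
which directly contradicts the double-criticality of $G$ at $xy$. This is the entire mechanism: the ELT partition exhibits one edge whose deletion (of both endpoints) cannot lower the chromatic number by two.

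For the line-graph setting that is the focus of this paper the required input is available, since the main theorem with $\ell=0$ (equivalently the theorem of Kostochka and Stiebitz~\cite{KS08}) supplies the $s=2$ case of Conjecture~\ref{con1} for every line graph $G$ with $\chi(G)>\omega(G)$. It is worth recording what the conclusion says combinatorially: writing $G=L(H)$ for a multigraph $H$, a proper coloring of $G$ is an edge-coloring of $H$, so $\chi(G)=\chi'(H)$, double-criticality means that deleting any two adjacent edges of $H$ drops the chromatic index by exactly two, and $G=K_t$ translates to $H$ being a star $K_{1,t}$ (or a triangle when $t=3$). Thus, within this class, the statement becomes a clean assertion about edge-colorings that is delivered by the ELT partition.

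The main obstacle is that, for general graphs, the $s=2$ case of Conjecture~\ref{con1} is itself open, so the reduction above cannot be completed unconditionally; the difficulty migrates entirely into producing the ELT partition, i.e. into controlling the color classes of an arbitrary $t$-chromatic graph well enough to split off a single edge together with a $(t-1)$-chromatic remainder disjoint from it. For line graphs the structure of $H$ (Vizing-type bounds on $\chi'(H)$ and the matching and fan arguments underlying them) is exactly what makes this partition accessible, whereas no comparable structure is available in general. This is why Conjecture~\ref{con2} remains open outside the special classes---small $t$, line graphs, and quasi-line graphs---in which the $s=2$ partition can actually be constructed.
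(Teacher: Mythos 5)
The statement you are asked about is a \emph{conjecture}: the paper does not prove it, and explicitly records that it remains wide open for $t\ge 6$. So there is no proof in the paper to compare against; the paper only observes that Conjecture~\ref{con2} is the $s=2$ case of Conjecture~\ref{con1} and therefore holds for those classes of graphs (and those small values of $t$) where the latter is known. Your proposal is exactly this standard reduction, carried out correctly: the argument that a double-critical $t$-chromatic graph other than $K_t$ must satisfy $\chi(G)>\omega(G)$ is fine, and the step from the ELT partition with $s=2$, $t'=t-1$ to a contradiction with double-criticality at an edge of $G[S]$ is also fine. But since the $s=2$ case of Conjecture~\ref{con1} is itself open for general graphs, what you have written is a conditional implication, not a proof of Conjecture~\ref{con2} --- as you yourself acknowledge in your closing paragraph. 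That is the genuine gap, and it is not one that can be repaired within the scope of this paper.

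Two smaller remarks. First, your translation of the line-graph case is slightly off: for a multigraph $H$, $L(H)\cong K_t$ can also arise when $H$ consists of $t$ parallel edges between two vertices (or mixtures of parallel edges within a star or triangle), not only when $H$ is the star $K_{1,t}$ or a triangle. Second, note that what the paper's main theorems actually deliver for line graphs is stronger than the $\ell=0$ case only when $s\ge 3.5\ell+2$ with $\ell\ge 1$, so they say nothing new about the $s=2$ regime relevant to double-criticality; for that regime the relevant citation is Kostochka and Stiebitz~\cite{KS08}, as you correctly indicate.
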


From \cite{S87a}, Conjecture~\ref{con2} holds when $t\le 5$. For $t\ge 6$, Conjecture \ref{con2} remains wide open, and we even do not know if every double-critical $t$-chromatic graph contains $K_4$ as a subgraph. As Conjecture~\ref{con1} implies Conjecture~\ref{con2}, Conjecture \ref{con2} is true for some special classes of graphs as mentioned above. The following are some results related to Conjecture \ref{con2}. Kawarabayashi, Pedersen and Toft~\cite{KPT10} have shown that any double-critical, $t$-chromatic graph contains $K_t$ as a minor for $t\in\{6,7\}$. Pedersen ~\cite{P11} showed that any double-critical, 8-chromatic graph contains $K_{8^-}$ as a minor. Albar and Gon\c{c}alves~\cite{AG18} later proved that any double-critical, 8-chromatic graph contains $K_8$ as a minor. Their proof is computer-assisted. Rolek and Song~\cite{RS18} gave a computer-free proof of the same result and further showed that any double-critical, $t$-chromatic graph contains $K_9$ as a minor for all $t\ge9$. Recently, Huang and Yu~\cite{HY} proved that the only double-critical, 6-chromatic, claw-free graph is $K_6$. Rolek and Song~\cite{RS17} further proved that the only double-critical, $t$-chromatic, claw-free graph is $K_t$ for all $t\le8$.

In this paper, we would like to study a enhanced version of Conjecture~\ref{con1} that for nonnegative integer $\ell$, if there exists a partition $(S,T)$ of the vertex set $V(G)$ such that $\chi(G[S])\geq s$ and $\chi(G[T])\ge t+\ell$ and how much $\ell$ can be?

As a starting point, we consider line graphs.  It turns out that Conjecture~\ref{con1} can be greatly enhanced: for any nonnegative integer $\ell$ when $s,t\ge 3.5\ell+2$, we can find a clique of size $s$ to delete in the line graph $L(G)$ such that the chromatic number of the remaining graph is at least $t+\ell$ (other than $t$ in Conjecture~\ref{con1}).

\begin{theorem} \label{th1}
Let $s, t$ and $\ell$ be arbitrary integers with $3.5\ell+2\leq s\leq t$, $\ell\geq 0$. If the line graph $L(G)$ of some multigraph $G$ has chromatic number $s+t-1> \omega(L(G))$, then it contains a clique $Q$ of size $s$ such that $\chi(L(G)-Q)\geq t+\ell$.
\end{theorem}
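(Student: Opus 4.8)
The plan is to work on the edge-coloring side via the standard dictionary $\chi(L(G))=\chi'(G)$, using that a clique of $L(G)$ is exactly a set of pairwise-adjacent edges of $G$, i.e. a \emph{star} (edges through a common vertex) or a \emph{triangle} (all edges inside a $3$-vertex set); deleting a clique $Q$ from $L(G)$ amounts to deleting the corresponding edge set from $G$, and $\chi(L(G)-Q)=\chi'(G-Q)$. Thus Theorem~\ref{th1} becomes: if $\chi'(G)=k:=s+t-1$ exceeds $\omega(L(G))$, then $G$ has $s$ pairwise-adjacent edges whose deletion leaves chromatic index at least $t+\ell$. First I would reduce to the case that $G$ is $\chi'$-edge-critical, which loses nothing since passing to a subgraph of smaller chromatic index only makes the conclusion easier, and I would record the two consequences of $k>\omega(L(G))\ge\Delta(G)$: namely $\chi'(G)\ge\Delta(G)+1$, and no triangle $xyz$ carries $\mu_{xy}+\mu_{yz}+\mu_{zx}\ge k$ parallel edges, since such a triangle would itself be a clique of size $\ge k$ in $L(G)$.

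The heart of the argument is a density count. Using the lower bound $\chi'(G)\ge\Gamma(G):=\max_{H}\lceil |E(H)|/\lfloor|V(H)|/2\rfloor\rceil$ together with the matching upper bound in the dense regime (Goldberg--Seymour, valid once $\chi'\ge\Delta+2$), I would extract a subgraph $R$ on $|V(R)|=2m+1$ vertices with $|E(R)|\ge(k-1)m+1$. Even order is impossible, since $2m$ vertices with that many edges force average degree $>k-1$, hence a vertex of degree $\ge k>\Delta$. Crucially $m\ge 2$: if $m=1$ then $R$ is a triangle carrying $\ge k$ pairwise-adjacent edges, contradicting the triangle bound above. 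Now if some vertex $v$ with $d_G(v)\ge s$ lies outside $R$, take $Q$ to be any $s$ edges at $v$; they miss $E(R)$, so $\chi'(G-Q)\ge\Gamma(R)=k\ge t+\ell$. Otherwise pick $v\in V(R)$ of $R$-degree at least $s$ (this exists by averaging, since $2|E(R)|/(2m+1)\ge s$) and let $Q$ be $s$ of its edges inside $R$; then $R-Q$ witnesses $\Gamma(G-Q)\ge\lceil((k-1)m+1-s)/m\rceil\ge t+\ell$, the last step reducing to $(s-1-\ell)m>s-1$, which for $m\ge 2$ holds as soon as $s\ge 2\ell+2$.

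The remaining and genuinely harder case is the boundary regime $\chi'(G)=\Delta(G)+1$, where no dense subgraph is available and Goldberg--Seymour gives nothing. Here $\Delta=k-1=s+t-2\ge s$, and the strategy is to delete a star $Q$ of $s$ edges at a maximum-degree vertex $v$ and to preserve \emph{some} vertex of degree at least $t+\ell$, which already forces $\chi'(G-Q)\ge t+\ell$. If a maximum-degree vertex $w$ is non-adjacent to $v$, it survives untouched and $d(w)=\Delta=s+t-2\ge t+\ell$ (using $s\ge\ell+2$). When every high-degree vertex is clustered around $v$, I would choose $Q$ to spend as few of its $s$ slots as possible on the parallel edges $vw$, and bound the surviving degree of $w$ using $\mu_{vw}\le\omega(L(G))=\Delta$ together with the triangle constraint. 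Guaranteeing enough maximum-degree neighbours for this to work is where Vizing's Adjacency Lemma for critical multigraphs and a Shannon-/fan-type estimate on how far $\chi'$ can fall when a size-$s$ star is removed enter; it is precisely this estimate, losing a factor of order $3/2$, that forces the stated threshold $s\ge 3.5\ell+2$ rather than the cleaner $s\ge 2\ell+2$ of the dense case.

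I expect the main obstacle to be exactly this last step: bounding the drop $\chi'(G)-\chi'(G-Q)$ when $Q$ is a size-$s$ clique in the sparse class-$2$ regime, and organizing the choice of $v$ and of the particular $s$ edges so that a vertex (or small dense configuration) of chromatic-index value $\ge t+\ell$ provably survives. The promised sharper statement for $\ell=1$ (allowing all $s,t\ge 4$) should follow by carrying out this deletion estimate exactly for $\ell=1$ instead of through the general inequality.
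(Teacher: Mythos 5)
Your reduction to the edge-coloring problem and your treatment of the dense regime are fine as far as they go: granting the Goldberg--Seymour theorem, the extraction of an odd subgraph $R$ on $2m+1$ vertices with $|E(R)|\ge (k-1)m+1$, the exclusion of $m=1$ via the triangle bound, and the computation showing $\Gamma(R-Q)\ge t+\ell$ once $s\ge 2\ell+2$ all check out. But the theorem is not proved, because the case $\chi'(G)=\Delta(G)+1$ --- which you correctly identify as the hard one --- is left as a sketch whose central idea cannot work. Your plan there is to delete a star $Q$ of $s$ edges at a maximum-degree vertex and to ``preserve some vertex of degree at least $t+\ell$.'' There is no reason such a vertex must exist: a critical multigraph with $\chi'=\Delta+1$ may have all of its maximum-degree vertices clustered at $v$ and joined to it by large multiplicities, and after removing $s$ edges at $v$ the maximum degree of $G-Q$ can drop to $\Delta-s=t-2<t+\ell$; in that situation $\chi'(G-Q)\ge t+\ell$ can only come from $G-Q$ being class two in a quantified way, i.e., from an actual recoloring argument, not from a degree count. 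The appeals to Vizing's Adjacency Lemma and a ``Shannon-/fan-type estimate'' are placeholders for exactly the missing argument, and the claim that they would yield the threshold $3.5\ell+2$ is speculation.

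For comparison, the paper never splits into dense and boundary regimes and never invokes Goldberg--Seymour. It takes a minimal counterexample, deletes a carefully balanced star $S_v$ of $s$ edges at a maximum-degree vertex $v$ (spreading the $s$ edges as evenly as possible over the neighbours of $v$), assumes a $(t+\ell-1)$-edge-coloring of $G-S_v$, and observes that at least $\ell+1$ colors are missing at $v$. It then introduces $s-\ell-1$ brand-new colors on most of $S_v$ and uses Hall's theorem on an auxiliary bipartite graph, together with Kempe-chain exchanges, to color the remaining $\ell+1$ edges with the missing colors, producing an $(s+t-2)$-edge-coloring of $G$ and a contradiction; the constant $3.5\ell+2$ emerges from the counting in that Hall-condition analysis (Claims 2.1--2.4). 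Your dense-regime computation is a genuinely different and pleasant observation (and gives the better bound $s\ge 2\ell+2$ there), but the substance of the theorem lives entirely in the case your proposal leaves open.
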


Note that when $\ell=0$, Theorem~\ref{th1} implies the result of Kostochka and Stiebitz \cite{KS08}.  When $\ell=1$, the bounds on $s,t$ can be made a little tighter.

\begin{theorem} \label{th2}
Let $s$ and $t$ be arbitrary integers with $4\leq s\leq t$. If the line graph $L(G)$ of some multigraph $G$ has chromatic number $s+t-1> \omega(L(G))$, then it contains a clique $Q$ of size $s$ such that $\chi(L(G)-Q)\geq t+1$.
\end{theorem}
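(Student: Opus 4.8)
The plan is to pass from the line graph to the underlying multigraph, reduce to the genuinely small values of $s$, and then split according to whether the chromatic index is forced by density or by degree. Write $\chi'(G)$ for the chromatic index, so that $\chi(L(G))=\chi'(G)$, and recall that the maximal cliques of $L(G)$ are exactly the \emph{stars} (the $\deg_G(x)$ edges at a vertex $x$) and the \emph{triangles} (the edges inside a triple of vertices); hence $\omega(L(G))=\max\{\Delta(G),T(G)\}$, where $T(G)$ is the largest number of edges spanned by three vertices. With $k:=\chi'(G)=s+t-1$, the hypothesis $\chi(L(G))>\omega(L(G))$ says $k>\Delta(G)$ and $k>T(G)$, and the goal is a star or triangle $Q$ with exactly $s$ edges and $\chi'(G-Q)\ge t+1$. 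Two reductions come first. By Vizing's multigraph bound $k\le\Delta(G)+\mu(G)$, where $\mu(G)$ is the maximum multiplicity, and since $\Delta(G),\mu(G)$ are themselves clique sizes in $L(G)$, we get $k\le 2\,\omega(L(G))$, so $\omega(L(G))\ge s$ and a clique of size $s$ does exist (any $s$ edges of a larger star or triangle again form one). Also, replacing $G$ by an edge-critical subgraph $H$ with $\chi'(H)=k$ preserves $\Delta(H),T(H)<k$, and a clique $Q\subseteq E(H)$ with $\chi'(H-Q)\ge t+1$ gives $\chi'(G-Q)\ge t+1$ by monotonicity; so I work inside $H$.

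Next I would note that $s\ge 6$ forces $t\ge s\ge 6$, and this range is already covered by Theorem~\ref{th1} with $\ell=1$ (since $3.5\cdot 1+2=5.5\le 6$). Thus the whole content of Theorem~\ref{th2} is the boundary $s\in\{4,5\}$, $t\ge s$. For these I apply the Goldberg--Seymour theorem to $H$: either $\chi'(H)=\Gamma(H)\ge\Delta(H)+2$ (density-driven), or $\chi'(H)=\Delta(H)+1$ (degree-driven), where $\Gamma(H)=\max_{J}\lceil 2|E(J)|/(|V(J)|-1)\rceil$ over subgraphs $J$ with $|V(J)|\ge 3$ odd. In the density case there is an obstruction $J$ with $n:=|V(J)|$ odd and $|E(J)|\ge (k-1)(n-1)/2+1$; since $k>T(H)$ rules out a three-vertex obstruction, $n\ge 5$. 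The average degree in $J$ is at least $(k-1)\tfrac{n-1}{n}+\tfrac{2}{n}\ge s$ (using $k-1\ge 2s-2$ and $n\ge 5$), so some $v\in V(J)$ has $\deg_J(v)\ge s$; let $Q$ consist of $s$ edges of the star at $v$ inside $J$. Deleting these $s$ edges leaves $\chi'(G-Q)\ge\lceil 2(|E(J)|-s)/(n-1)\rceil\ge (k-1)-\lfloor(2s-2)/(n-1)\rfloor$, and the inequality $\lfloor(2s-2)/(n-1)\rfloor\le s-3$ holds for every $s\ge 4$ and $n\ge 5$, giving $\chi'(G-Q)\ge t+1$. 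So the density case is clean for all admissible $s$.

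The degree-driven case $\chi'(H)=\Delta(H)+1$ (so $\Delta(H)=k-1$) is where I expect the real difficulty. Here no dense obstruction is available, and deleting a clique can in principle destroy the single degree-plus-parity obstruction that certifies $k$ colours. Taking $v$ of maximum degree $k-1$ and $Q$ a star of $s$ of its edges gives $\deg_{H-Q}(v)=t-2$; if $H$ still contains a vertex of degree $\ge t+1$ untouched by $Q$, then $\chi'(H-Q)\ge t+1$ at once, so the hard configuration is when every vertex of degree $\ge t+1$ is clustered on $v$ and the $\le s$ other endpoints of $Q$. I would then use the critical structure—adjacent vertices have degree sum $\ge k+1$, and more precisely Vizing's Adjacency Lemma—to show that $v$ has enough high-degree neighbours that one can choose \emph{which} $s$ edges to delete so as to leave intact a high-degree vertex together with a Vizing fan (or Tashkinov tree) around it, certifying $t+1$ colours in $H-Q$.

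The bookkeeping in this fan argument is exactly what yields the threshold $3.5\ell+2$ in Theorem~\ref{th1}; for $\ell=1$ only two surplus colours must be preserved, and the sharpened count brings the requirement down to $s\ge 4$. This degree-driven case at $s\in\{4,5\}$ is the main obstacle: unlike the density case, there is no monotone quantity to track, so one must argue through a recolouring/fan analysis that is tight precisely at the two smallest admissible values of $s$, and one must also take care that the edges one deletes genuinely form a star (a clique) of size exactly $s$.
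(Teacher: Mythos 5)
Your reduction and your density case are fine (and the density computation via Goldberg--Seymour is a genuinely different route from the paper's), but the proof has a real gap: the degree-driven case $\chi'(H)=\Delta(H)+1$ is only sketched, and the sketch as stated does not work. Two problems. First, a Vizing fan or Tashkinov tree ``around a high-degree vertex'' is a device for proving \emph{upper} bounds on the chromatic index, or for extending partial colourings; it does not certify the \emph{lower} bound $\chi'(H-Q)\ge t+1$ that you need. To get that lower bound one must argue by contradiction: assume $H-Q$ admits a $t$-edge-colouring and upgrade it to an $(s+t-2)$-edge-colouring of $H$ itself, contradicting $\chi'(H)=s+t-1$; your sketch never sets up this contradiction. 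Second, Vizing's Adjacency Lemma in the form you invoke is a statement about simple graphs; for multigraphs the adjacency structure of critical graphs is weaker, and the heuristic ``adjacent vertices have degree sum $\ge k+1$'' does not by itself control how the at most $s+1$ vertices touched by $Q$ interact with the rest of $H$. You have correctly identified where the difficulty lives --- this case is essentially the entire content of the theorem once the density case is dispatched --- but you have not resolved it.

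For comparison, the paper avoids the Goldberg--Seymour dichotomy altogether. It takes a minimum counterexample, picks a maximum-degree vertex $v$ with $|N(v)|$ maximised, selects the clique $S_v$ as $s$ edges at $v$ spread as evenly as possible over the neighbours of $v$, assumes $G-S_v$ has a $t$-edge-colouring, and extends it to an $(s+t-2)$-edge-colouring of $G$ by introducing $s-2$ new colours and recolouring via Hall's theorem on an auxiliary bipartite graph together with Kempe chains; Claims \ref{c1}, \ref{c2} and \ref{c23} run this machine for $|N(v)|\ge 4$, $|N(v)|\le 2$ and $|N(v)|=3$ respectively. If you want to keep your structure, the degree case is exactly where such an explicit recolouring argument must be supplied. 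A further minor caution: invoking Theorem \ref{th1} with $\ell=1$ to dispose of $s\ge 6$ is legitimate as a citation, but in the paper the $\ell=1$ case of Theorem \ref{th1} is itself obtained from the proof of Theorem \ref{th2}, so that reduction buys less than it appears to.
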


In the next section, we prove the main results.  In the final section, we have further discussion.

\section{Proof of Theorem \ref{th1} and Theorem \ref{th2}}\label{sect2}

In this section, we prove Theorem \ref{th1} and Theorem \ref{th2}.

The case $\ell=0$ is the Erd\H{o}s-Lov\'asz Tihany Conjecture for line graphs of multigraphs and has been proved by \cite{KS08}. So in the following, we just consider that $\ell>0$. Let $s,t\geq 3.5\ell+2$ when $\ell\ge 2$ and $s\ge 4$ when $\ell=1$.   Suppose that $G$ is a counterexample to the theorems with fewest vertices. Then $G$ is connected.

For a vertex $v\in V(G)$, let $d(v)$ be the degree and $N(v)$ be the set of neighbors of $v$. Let $\Delta(G)$ be the maximum degree of $G$.  Note that $d(v)\ge |N(v)|$.  Let $E(v)=\{e\in E(G): e=uv$ for some $u\in N(v)\}$ and $E(uv)=\{e\in E(G):$ the endpoints of $e$ are $u$ and $v\}$. We denote $|E(uv)|=m(uv)$ and $|E(v)|=m(v)=d(v)$. For $S_v\subseteq E(v)$, let $V_{S_v}=\{u\in N(v): E(uv)\cap S_v\not=\emptyset\}$.

A triangle in $G$ consists of three mutually adjacent vertices. The maximum number of edges between vertices in triangles in $G$ will be denoted by $\tau(G)$. Let $\omega'(G)=\max\{\tau(G),\Delta(G)\}$. Then $\omega'(G)=\omega(L(G))$.  Note that $|E(G)|\ge \chi'(G)=\chi(L(G))=s+t-1\ge 7\ell+1$. By Shannon's theorem \cite{Sh}, $\lfloor\frac{3\Delta(G)}2\rfloor\ge \chi'(G)=s+t-1\ge 2s-1$. So $s\leq\Delta(G)$.

For all vertices with degree $\Delta(G)$, we choose $v$ such that $|N(v)|$ is as large as possible. Let $N(v)=\{v_1, \ldots, v_d\}$. We also assume that $m(vv_1)\geq m(vv_2)\geq\ldots \geq m(vv_{d})$.

Next, we pick $S_v\subseteq E(v)$ with $s$ edges according to the following rules.  We pick one edge for $S_v$ successively from each edge set of $E(vv_1), \ldots, E(vv_{d})$ and delete the selected edges. We repeat the above step in the remaining graph. If we cannot pick edges in $E(vv_i)$ for some $i\in [d]$, then let us start a new cycle to pick edges from $E(vv_1)$ until we have selected $s$ edges. Let $S_v\cap E(vv_i)=S(vv_i), |S(vv_i)|=s(vv_i)$. Thus, we have that $s(vv_1)\geq s(vv_2)\geq\ldots \geq s(vv_{d})\ge 1$, and furthermore, we have the following useful fact:
\begin{center}
For each $1\le i<j\le d-1$, if $s(vv_i)-s(vv_{j})\ge 2$, then $s(vv_{j})=m(vv_{j})$.
\end{center}

We shall consider the edge-coloring of $G$, which is equivalent to the vertex-coloring of $L(G)$. Since $G$ is a counterexample and $S_v$ forms a clique in $L(G)$, we have $\chi'(G-S_v) \leq t+\ell-1$. Let $G' = G-S_v$, and let $\varphi$ : $E(G')\rightarrow\{1,\ldots ,t+\ell-1\}$ be a proper $(t+\ell-1)$-edge-colouring of $G'$. For vertex $x\in V(G)$, let
$$\varphi(x)=\{\varphi(e):  e\in E(G')\cap E(x)\} \text{ and } \overline{\varphi}(x) =\{1,\ldots, t+\ell-1\}- \varphi(x)$$

Since $s+t-1 = \chi'(G) > \omega'(G) \geq \Delta (G)\ge d(v)$ and all $s$ edges of $S_v$ are incident with $v$,
\begin{equation}\label{a}
  |\overline{\varphi}(v)|\geq t+\ell-1-(d(v)-s)\geq t+\ell-1-(s+t-1-s)=\ell+1.
\end{equation}

We denote $\overline{\varphi}(v)=\{c_1, \ldots, c_{\ell+1}, \ldots\}$. 
Let $\alpha_1, \alpha_2,\ldots ,\alpha_{s-\ell-1}$ be colors different from $[t+\ell-1]$, which together with $[t+\ell-1]$ gives $s+t-2$ colors. We will extend the edge-coloring $\varphi$ to a proper edge-coloring $\phi$ of $G$. We can always choose a set of $s-\ell-1$ edges $T_0\subseteq S_v$ and color them with $\{\alpha_1,\alpha_2,\ldots ,\alpha_{s-\ell-1}\}$. We will specify how to choose $T_0$, but once chosen, we let $T_0=\{e_{\alpha_1},e_{\alpha_2},\ldots ,e_{\alpha_{s-\ell-1}}\}$ such that $\phi(e_{\alpha_i})=\alpha_i$. Let $S_0=S_v-T_0=\{e_1,e_2,\ldots ,e_{\ell+1}\}$ be the set of remaining uncolored edges in $S_v$.

\begin{claim}\label{c1}
 $|N(v)|\leq \ell+2$.
\end{claim}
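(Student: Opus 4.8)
The plan is to argue by contradiction: suppose instead that $|N(v)|\geq \ell+3$, and show that the coloring $\varphi$ of $G'=G-S_v$ can then be extended to a proper edge-coloring $\phi$ of all of $G$ using only the $s+t-2$ colors $\{1,\dots,t+\ell-1\}\cup\{\alpha_1,\dots,\alpha_{s-\ell-1}\}$. Such a $\phi$ would give $\chi'(G)\leq s+t-2$, contradicting $\chi'(G)=s+t-1$, and hence force $|N(v)|\leq \ell+2$. Since the edges of $T_0$ receive the brand-new colors $\alpha_1,\dots,\alpha_{s-\ell-1}$, which appear on no edge of $G'$ and on no two edges of $T_0$ (they all meet $v$ and take distinct $\alpha_i$), those edges never create a conflict. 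Thus the whole task reduces to properly coloring the $\ell+1$ remaining edges $S_0=\{e_1,\dots,e_{\ell+1}\}$ with colors from $\{1,\dots,t+\ell-1\}$.

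Next I would set this up as a system-of-distinct-representatives (equivalently, a bipartite matching) problem. Writing $e_i=vu_i$, a color is admissible for $e_i$ exactly when it lies in $\overline{\varphi}(v)\cap\overline{\varphi}(u_i)$, and since $e_1,\dots,e_{\ell+1}$ all meet $v$ the chosen colors must be pairwise distinct. So I want a matching saturating $S_0$ in the bipartite graph whose parts are $S_0$ and the color set $\overline{\varphi}(v)$, with $e_i$ joined to every $c\in\overline{\varphi}(v)\cap\overline{\varphi}(u_i)$. Here I would use the freedom in how $S_v$ splits into $T_0$ and $S_0$: because each $s(vv_i)\geq 1$ and $|N(v)|\geq \ell+3$, I can route the $\ell+1$ edges of $S_0$ to $\ell+1$ distinct, judiciously chosen neighbors while leaving the high-multiplicity edges in $T_0$.

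To verify Hall's condition I would quantify the available colors. From $d(v)\leq \Delta(G)\leq s+t-2$ one gets $|\overline{\varphi}(v)|=\ell+(s+t-1-d(v))\geq \ell+1$, and for each neighbor $u_i$ one has $|\overline{\varphi}(u_i)|=(t+\ell-1)-(d(u_i)-s(vu_i))$ with $d(u_i)\leq d(v)$; combining these gives the key lower bound
\begin{equation*}
|\overline{\varphi}(v)\cap\overline{\varphi}(u_i)|\;\geq\;\ell+s+t-1-d(v)-d(u_i)+s(vu_i).
\end{equation*}
Together with the selection fact controlling the $s(vv_i)$ and the hypothesis $s,t\geq 3.5\ell+2$, these estimates should keep every relevant union of admissible-color sets at least as large as the number of edges it must represent.

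I expect the main obstacle to be exactly this verification of Hall's condition. A single \emph{color-poor} neighbor---one with many edges outside $S_v$, so that $\overline{\varphi}(u_i)$ is small or even disjoint from $\overline{\varphi}(v)$---can by itself obstruct the matching, and the crude intersection bound above is not always positive. The point of the hypothesis $|N(v)|\geq \ell+3$ is precisely to provide spare neighbors, so that $S_0$ can be routed to avoid such problematic vertices, while $s,t\geq 3.5\ell+2$ keeps the generic estimates in force. The bulk of the work, and the delicate part, will be the case analysis showing that once $|N(v)|\geq \ell+3$ no blocking set can survive, so the matching---and hence the forbidden $(s+t-2)$-coloring---always exists.
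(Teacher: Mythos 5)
Your overall strategy --- contradiction via extending $\varphi$ to an $(s+t-2)$-edge-coloring, with the fresh colors $\alpha_1,\dots,\alpha_{s-\ell-1}$ parked on $T_0$ and a Hall-type matching argument for $S_0$ --- matches the paper's in outline, but the matching you set up is the wrong one, and the obstacle you flag at the end is not a technicality that case analysis can close: it is fatal to this version of the argument. You ask for a system of distinct representatives assigning to each $e_i=vu_i\in S_0$ a color in $\overline{\varphi}(v)\cap\overline{\varphi}(u_i)$, i.e.\ a color currently absent at \emph{both} endpoints. But every neighbor of $v$ may be color-poor simultaneously: $v$ was chosen of maximum degree, $\Delta(G)$ can be as large as $s+t-2$, and a neighbor $u_i$ with $d(u_i)=\Delta(G)$ satisfies only $|\overline{\varphi}(u_i)|\geq (t+\ell-1)-(d(u_i)-s(vu_i))=\ell+1-s+s(vu_i)$, which is negative for small $s(vu_i)$ since $s\geq 3.5\ell+2$; the set $\overline{\varphi}(u_i)$ can genuinely be empty. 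In that situation no edge at $u_i$ admits \emph{any} color of $[t+\ell-1]$ directly, the hypothesis $|N(v)|\geq\ell+3$ supplies no color-rich neighbor to reroute $S_0$ toward, and your bipartite graph has isolated vertices on the $S_0$ side no matter how $S_v$ is partitioned. No verification of Hall's condition can rescue this.

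The missing idea is that the fresh colors must be used \emph{actively for recoloring}, not merely assigned to $T_0$. In the paper, when $c_i\in\overline{\varphi}(v)$ is blocked at the far endpoint of $e_i$ by an already-colored edge $e_i'$, one recolors $e_i'$ with some fresh color $\alpha_j$ and only then gives $e_i$ the color $c_i$; consequently the auxiliary bipartite graph is between $S_0$ and $T_0$ (not between $S_0$ and the color set $\overline{\varphi}(v)$), with $e_ie_{\alpha_j}$ an edge exactly when $e_i'$ is not adjacent to $e_{\alpha_j}$, and the hypothesis $|N(v)|\geq\ell+3$ is what yields $d_T(e_i)\geq\min\{d',s-\ell-1\}-2\geq\ell+1$ so that Hall's theorem applies. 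Note also that this degree count breaks down for $\ell=1$ and $s\in\{4,5\}$, where the paper runs a separate argument using an additional Kempe-chain interchange; your proposal contains no analogue of either step.
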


\begin{proof}
Suppose to the contrary that $|N(v)|=d\geq \ell+3$. By the choice of $S_v$,  $|V_{S_v}|=d'= \min\{d,s\} \geq \ell+3$.  Let $V_{S_v}=\{v_1,\ldots ,v_{d'}\}$. We show that $G$ has a proper $(s+t-2)$-edge-coloring.

Suppose that $\ell=1$ and $4\le s\le5$. Note that $|S_0|=\ell+1=2$ and $2\le|T_0|\le3$. Let $S_0=\{e_1,e_2\}$ and $e_i\in E(vv_i)$ for $i\in\{1,2\}$.
For $1\le i\le |T_0|$, we color one edge in $E(vv_{i+2})$ with $\alpha_i$ and denote the edge by $e_{\alpha_i}$. We try to color $e_i\in S_0$ with $c_i\in \overline{\varphi}(v)$ such $\varphi(e_i)=c_i$. It is not possible only when $c_i$ appears on an edge, say $e_i'$, incident with the endpoint of $e_i$. We construct a bipartite graph $T$ with parts $S_0=\{e_1,e_2\}$ and $T_0=\{e_{\alpha_1},e_{\alpha_2}\}$ such that $e_ie_{\alpha_j}\in E(T)$ if and only if there is no edge colored $c_i$ that is incident with the endpoints of $e_i$ and $e_{\alpha_j}$. Since $d_{T}(e_i)\geq 1$, $T$ contains a matching edge, say $e_1e_{\alpha_{1}}$. We also know that if there is a matching edge saturating $e_i$, then we can color $e_i$ with $c_i$ and recolor $e_i'$ with the color of the edge that is matched with $e_i$. Therefore, $e_2$ is not covered by a matching edge, that is, $e_2$ is not adjacent to $e_{\alpha_2}$ in $T$. And we also know that $e_1$ is not adjacent to $e_{\alpha_2}$ in $T$. 
So $e_1, e'_1, e_{\alpha_2}$ belong to the same triangle, say $vv_1v_4$. And $e_2,e'_2,e_{\alpha_2}$ belong to the same triangle, say $vv_2v_4$. 
Now we color $e_1$ with $c_1$ and recolor $e_1'$ with $\alpha_1$. And then consider the subgraph $H_{c_2,\alpha_1}$ induced by edges colored with $c_2$ and $\alpha_1$. The components are path and even cycles, and a component, say $P$, contains $v_2$. We interchange the colors $c_2$ and $\alpha_1$ on $P$ directly. So $e'_2$ is colored with $\alpha_1$ now. Thus, $e_2$ can be colored with $c_2$. Thus we obtain an $(s+t-2)$-coloring of $E(G)$, a contradiction. 

Otherwise, $s\ge 3.5\ell+2$. For $1\le i\le d'$, we color one edge in $E(vv_i)$ with $\alpha_i$ and denote the edge by $e_{\alpha_i}$. Among all the uncolored edges in $E(v)$, we choose edge set $S_0$ with $|S_0|=\ell+1$ such that $V_{S_0}=\{v_i: vv_i\in S_0\}$ is maximized.  Label the edges of $S_0$ as $e_1,e_2,\ldots ,e_{\ell+1}$. Then we color the uncolored edges of $S_v\backslash S_0$ with $\alpha_{d'+1},\ldots ,\alpha_{s-\ell-1}$ arbitrarily and denote by $e_{\alpha_i}, i\in \{d'+1,\ldots ,s-\ell-1\}$.

We try to color $e_i\in S_0$ with $c_i\in \overline{\varphi}(v)$. It is not possible only when $c_i$ appears on an edge, say $e_i'$, incident with the endpoint of $e_i$ that it is not $v$. 
We construct a bipartite graph $T$ with parts $S_0=\{e_1,\ldots ,e_{\ell+1}\}$ and $T_0=\{e_{\alpha_1},\ldots ,e_{\alpha_{s-\ell-1}}\}$. Let $e_ie_{\alpha_j}\in E(T)$  if and only if there is no edge colored $c_i$ that is incident with the endpoints of $e_i$ and $e_{\alpha_j}$.
Since $d_{T}(e_i)\geq \min\{d',s-\ell-1\}-2\geq \ell+1=|S_0|$, $T$ contains a matching saturating $S_0$ by Hall's Theorem. Now, we can color $e_i$ with $c_i$ and recolor $e_i'$  with the color of the edge that is matched with $e_i$.  Therefore, $G$ has an $(s+t-2)$-edge-coloring, a contradiction.
\end{proof}

\begin{claim}\label{c2}
 $|N(v)|\ge 3$. 
 \end{claim}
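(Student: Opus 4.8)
The plan is to suppose for contradiction that $|N(v)|\le 2$ and to rule out $|N(v)|=1$ and $|N(v)|=2$ separately, in each case contradicting $\chi'(G)=s+t-1>\omega'(G)$. If $|N(v)|=1$, then $v$ is joined only to $v_1$, so $d(v_1)\ge m(vv_1)=d(v)=\Delta(G)$; since $\Delta(G)$ is the maximum degree, this forces $d(v_1)=\Delta(G)=m(vv_1)$, so every edge at $v_1$ goes to $v$. As $G$ is connected, $G$ consists of $v$ and $v_1$ joined by $\Delta(G)$ parallel edges, whence $L(G)=K_{\Delta(G)}$ and $\chi'(G)=\Delta(G)=\omega'(G)$, a contradiction.

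The case $|N(v)|=2$ is the main obstacle. Write $N(v)=\{v_1,v_2\}$ with $m_i:=m(vv_i)$, $m_1\ge m_2$ and $m_1+m_2=\Delta(G)$. The counting behind Claim~\ref{c1} collapses here: with $d'=\min\{d,s\}=2$ the estimate $d_T(e_i)\ge\min\{d',s-\ell-1\}-2=0$ is vacuous, so Hall's theorem yields nothing and a different argument is required. Instead I would extend the colouring $\varphi$ of $G'=G-S_v$ directly to an $(s+t-2)$-edge-colouring of $G$. The $s$ edges of $S_v$ split into $s_1$ edges at $v_1$ and $s_2=s-s_1$ at $v_2$, and must receive $s$ distinct colours from the palette, all avoiding $\varphi(v)$ and, for the $v_i$-bundle, also avoiding $\varphi(v_i)$. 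Since $|\varphi(v)|=\Delta(G)-s\le t-2$, at least $s$ colours of the palette are free at $v$; distributing them among the two bundles is a system-of-distinct-representatives problem, solvable by Hall's theorem provided $\varphi(v_1)\cap P$ and $\varphi(v_2)\cap P$ are not too large, where $P$ denotes the set of palette colours free at $v$.

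The difficulty arises precisely when some $d(v_i)$ is close to $\Delta(G)$, so that $\varphi(v_i)$ is large and the naive count fails. Here I would exploit the maximality in the choice of $v$: if $d(v_i)=\Delta(G)$, then $v_i$ also has maximum degree, so $|N(v_i)|\le|N(v)|=2$, which tightly restricts $v_i$'s remaining edges. I would then split on whether $v_1v_2\in E(G)$: if it is, $vv_1v_2$ is a triangle and $\omega'(G)\ge\tau(G)\ge m_1+m_2+m(v_1v_2)=\Delta(G)+m(v_1v_2)$, so $\chi'(G)>\omega'(G)$ caps the relevant multiplicities and degrees enough to run the assignment; if it is not, $v$ lies in no triangle and the two bundles interact only through the colour classes at $v$, again making the extension feasible. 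In the residual tight configurations I expect to need a Kempe-chain swap along a two-coloured component through $v_i$, exactly as in the $\ell=1$, $s\le5$ part of Claim~\ref{c1}, to liberate the single missing colour. Verifying feasibility across all these boundary cases, while keeping the $s$ colours mutually distinct, is the step I expect to be hardest.
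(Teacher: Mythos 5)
Your disposal of the case $|N(v)|=1$ is fine, but the main case $|N(v)|=2$ is left as a plan with an explicitly admitted unfinished step, and the plan as stated does not contain the idea that actually closes it. The obstruction you correctly identify --- every one of the (as few as $\ell+1$) colours of $\overline{\varphi}(v)$ may be blocked at $v_1$ by edges from $v_1$ to vertices other than $v$ --- is not resolved by the tools you list. Maximality of $|N(v)|$ only bites when $d(v_1)=\Delta(G)$ exactly, not when $d(v_1)$ is merely close to $\Delta(G)$; a single Kempe swap cannot liberate the up to $\ell+1$ colours that may be blocked simultaneously; and the triangle bound alone does not cap $d(v_1)-m(vv_1)$, which can be of order $(s+t)/2$, far exceeding $\ell+1$. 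So your Hall/SDR formulation has no usable lower bound on the number of colours available at the $v_1$-bundle, and the case does not go through as described.

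The mechanism the paper uses, and which is missing from your proposal, is the following. Put all $\ell+1$ uncoloured edges $S_0$ inside $E(vv_1)$ (possible since $s(vv_1)\ge s/2\ge\ell+1$) and work with the set $A(v_1v_2)$ of colours absent from $E(v)\cup E(v_1v_2)$, which has size at least $\ell+1$ because $d(v)+m(v_1v_2)\le\tau(G)\le s+t-2$ when $v_1v_2$ is an edge (and trivially otherwise). A colour of $A(v_1v_2)$ can be blocked at $v_1$ only by an edge from $v_1$ to a third vertex, and the inequality $d(v_1)\le d(v)$ rewritten as $d(v_1)-m(vv_1)-m(v_1v_2)\le m(vv_2)$ bounds the number of such blocking edges. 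Each of them is then recoloured with one of the brand-new colours $\alpha_i$ that were placed on edges of $S(vv_2)$: these colours occur on exactly one edge each, incident only to $v$ and $v_2$, so the recolouring is proper, and there are enough of them (when $s(vv_2)<\ell+1$ the selection rule forces $m(vv_2)=s(vv_2)$, which is exactly what makes the count close). This recolouring-with-new-colours step is the essential ingredient; without it the $|N(v)|=2$ case remains open in your argument.
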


\begin{proof}
Assume that $|N(v)|\le 2$.  Then $s(vv_1)\geq \frac{s}{2}\ge\ell+1$. Let $S_0\subseteq S(vv_1)$.   Let $|S(vv_2)\cap T_0|=\min\{s(vv_2),s-\ell-1\}$, and
\begin{center}
$A(v_1v_2)=\{\alpha:\alpha\notin \varphi(v)$ and $\alpha$ does not appear on the edges between $v_1$ and $v_2\}$.
\end{center}
Then
\begin{align*}
|A(v_1v_2)|&\ge (t+\ell-1)-(d(v)-s)-m(v_1v_2)=(t+s-1)+\ell-(d(v)+m(v_1v_2))\\
&\ge (t+s-1)+\ell-\tau(G)\ge(t+s-1)+\ell-(t+s-2)=\ell+1.
\end{align*}

Furthermore, as $d(v)\ge d(v_1)$,
\begin{equation}\label{a1}
  m(vv_2)=d(v)-m(vv_1)\ge d(v_1)-(m(vv_1)+m(v_1v_2)).
\end{equation}

If $s(vv_2)\geq \ell+1$, then we can make $\ell+1$ colors in $A(v_1v_2)$ available to use on uncolored edges in $E(vv_1)$, by recoloring the edges at $v_1$ with colors on edges in $S(vv_2)$. On the other hand, if $s(vv_2)<\ell+1$, then we have $$
s(vv_1)-s(vv_2)=s-2s(vv_2)\ge \begin{cases}1.5\ell+2\ge 2, \text{ if $\ell\ge 2$ },\\ 4-2=2, \text { if $\ell=1$}.
\end{cases}$$
It follows from the choice of $S_v$ that $m(vv_2)=s(vv_2)$.  So for colors appearing at edges in $E(v_1)-(E(v_1v_2)\cup E(vv_1))$, we may recolor them with distinct colors at $S(vv_2)$ by \eqref{a1}. Therefore, we can always make $\ell+1$ colors available for the $\ell+1$ uncolored edges in $E(vv_1)$, and obtain an $(s+t-2)$-coloring of $E(G)$, a contradiction.
\end{proof}

\begin{claim}\label{c23}
$\ell\ge 2$.
\end{claim}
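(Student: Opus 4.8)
The plan is to rule out $\ell=1$ outright. Claims~\ref{c1} and~\ref{c2} together give $3\le |N(v)|\le \ell+2$, so the case $\ell=1$ forces $|N(v)|=3$, say $N(v)=\{v_1,v_2,v_3\}$; it then suffices to show that in this configuration the colouring $\varphi$ extends to a proper $(s+t-2)$-edge-colouring of $G$, contradicting $\chi'(G)=s+t-1$. Here $|S_0|=\ell+1=2$ and $|T_0|=s-2\ge 2$, and since $s\ge 4$ the distribution satisfies $s(vv_1)\ge 2$.

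First I would colour the $s-2$ edges of $T_0$ with the fresh colours $\alpha_1,\dots,\alpha_{s-2}$; these never create a conflict, so the whole problem reduces to colouring the two edges $S_0=\{e_1,e_2\}$ with two distinct colours $c_1,c_2\in\overline{\varphi}(v)$. I would try to place $e_i$ at a neighbour whose non-$v$ endpoint misses $c_i$, which settles any $e_i$ that has no conflict; the work is therefore concentrated in the case where both $e_1$ and $e_2$ clash, say with edges $e_1',e_2'$ coloured $c_1,c_2$ at their non-$v$ endpoints. For this I reuse the bipartite compatibility graph $T$ of Claim~\ref{c1} on the parts $S_0$ and $T_0$, where $e_ie_{\alpha_j}\in E(T)$ precisely when recolouring $e_i'$ with $\alpha_j$ is legal. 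A matching saturating $S_0$ then lets me colour $e_i$ by $c_i$ and push $e_i'$ onto its matched colour, finishing at once.

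The delicate point is that with only three neighbours such a matching need not exist: the non-$v$ endpoints of $e_1',e_2'$ and of the compatible $e_{\alpha_j}$ may all be squeezed into $\{v_1,v_2,v_3\}$, collapsing $N_T(e_1)\cup N_T(e_2)$ to a single common vertex $e_{\alpha_1}$ so that Hall's condition fails. In that event the conflict is rigid, exactly as at the end of the $4\le s\le 5$ subcase of Claim~\ref{c1}: the edges $e_1,e_1'$ together with a common obstructing $e_{\alpha}$ lie in a triangle $vv_1v_3$, while $e_2,e_2'$ lie in a triangle $vv_2v_3$, the shared third vertex being forced to be the remaining neighbour $v_3$. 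I would then colour $e_1$ by $c_1$, recolour $e_1'$ by its matched colour $\alpha_1$, pass to the two-coloured subgraph $H_{c_2,\alpha_1}$ (whose components are paths and even cycles), and interchange $c_2$ and $\alpha_1$ along the component through the non-$v$ endpoint of $e_2$, which recolours $e_2'$ and frees $c_2$ for $e_2$.

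I expect this Kempe-chain step to be the main obstacle, and precisely because there are only three neighbours it is tighter than in Claim~\ref{c1}: the entire obstruction lives inside $\{v,v_1,v_2,v_3\}$, so one must check that the component of $H_{c_2,\alpha_1}$ used for the swap does not run back through $v_1$ and undo the recolouring of $e_1'$, and that it is a path ending at the non-$v$ endpoint of $e_2$ rather than a cycle that re-blocks $c_2$. The two facts that should make this work are that $c_2\in\overline{\varphi}(v)$, so $v$ is never an interior vertex of the chain, and the triangle bound $m(vv_i)+m(vv_j)+m(v_iv_j)\le \tau(G)\le s+t-2$ used in Claim~\ref{c2}, which limits the multiplicities inside the two governing triangles and pins down the chain. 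Once these are in hand, $G$ admits an $(s+t-2)$-edge-colouring, the desired contradiction, so $\ell\ge 2$.
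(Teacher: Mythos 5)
Your overall strategy matches the paper's: assume $\ell=1$, use Claims~\ref{c1} and~\ref{c2} to force $|N(v)|=3$, build the bipartite graph $T$ between $S_0$ and $T_0$, extract a rigid triangle configuration when no saturating matching exists, and finish with a Kempe chain. The gap is in the final step, which you correctly identify as the crux but do not actually close. You swap along the component of $H_{c_2,\alpha_1}$, where $\alpha_1$ is the \emph{new} colour just placed on $e_1'$. The danger is not $v$ being an interior vertex of the chain (your remark that $c_2\in\overline{\varphi}(v)$ only rules that out); it is that $\alpha_1$ \emph{does} appear at $v$, on the edge $e_{\alpha_1}\in E(v)$. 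In the rigid configuration all of $T_0-e_{\alpha_1}$ is forced into the edge set between $v$ and the shared third vertex, so $e_{\alpha_1}$ must sit on one of the other two neighbours of $v$; if its non-$v$ endpoint lies on the chain (for instance $e_{\alpha_1}\in E(vv_2)$ while the chain starts at $v_2$ with $e_2'$), the component terminates at $v$ through $e_{\alpha_1}$, and the interchange recolours $e_{\alpha_1}$ with $c_2$, putting $c_2$ into $\varphi(v)$ and re-blocking $e_2$. Your appeal to the triangle bound "pinning down the chain" does not repair this; one must exhibit a swap colour that cannot drag $c_2$ back onto an edge at $v$.

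That is precisely what the paper's proof supplies and your proposal lacks. The paper takes $S_0\subseteq S(vv_1)$ (both uncoloured edges parallel into $v_1$, which is possible since $s(vv_1)\ge2$), locates the obstructing triangle $vv_1v_2$ with $s(vv_3)=1$, and then uses the count $|A(v_1v_2)|\ge t+s(vv_1)+s(vv_2)-\tau(G)\ge s(vv_1)+s(vv_2)-s+2=1$ (via $\tau(G)\le\omega'(G)\le s+t-2$) to produce an \emph{old} colour $\alpha$ absent from all of $E(vv_1)\cup E(vv_2)\cup E(v_1v_2)$. The $(c_2,\alpha)$-chain through $v_1$ can then reach $v$ only via an $\alpha$-coloured edge of $E(vv_3)$, and that single escape route is blocked by first recolouring the $c_2$-edge $e^*$ at $v_3$ with a fresh colour. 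Your argument is missing both ingredients: the counting step that manufactures a safe swap colour, and the preliminary recolouring at the remaining neighbour. Without them the Kempe step can fail, so the proof of $\ell\ge2$ is not complete as written.
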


\begin{proof}
Suppose that $\ell=1$. By Claims \ref{c1} and \ref{c2}, $|N(v)|=3$, and from \eqref{a}, $|\overline{\varphi}(v)|\geq 2$. Since $s\geq4$, we have that $s(vv_1)\geq 2$.  Choose $S_0\subseteq S(vv_1)$.   For $i=1,2$, we color one edge in $S(vv_{i+1})$ with $\alpha_{i}$ and denote $e_{\alpha_i}$ the edge. Then we color the uncolored edges of $S_v-S_0$ with $\alpha_{3},\ldots ,\alpha_{s-2}$ arbitrarily and denote $e_{\alpha_i}$ the edge of color $\alpha_i$.

We try to color $e_i\in S_0$ with $c_i\in \overline{\varphi}(v)$ such $\varphi(e_i)=c_i$. It is not possible only when $c_i$ appears on an edge, say $e_i'$, incident with the endpoint of $e_i$. We construct a bipartite graph $T$ with parts $S_0=\{e_1,e_2\}$ and $T_0=\{e_{\alpha_1},\ldots ,e_{\alpha_{s-2}}\}$ such that $e_ie_{\alpha_j}\in E(T)$ if and only if there is no edge colored $c_i$ that is incident with the endpoints of $e_i$ and $e_{\alpha_j}$. Since $d_{T}(e_i)\geq d-2=1$, $T$ contains a matching edge $e_1e_{\alpha_{j_0}}$. We also know that if there is a matching edge saturating $e_i$, then we can color $e_i$ with $c_i$ and recolor $e_i'$ with the color of the edge that is matched with $e_i$. Therefore, $e_2$ is not covered by a matching edge, that is, $e_2$ is not adjacent to $e_{\alpha_{j}}$ for every $j\neq j_0$ in $T$. So $e_2,e'_2$ and $T_0-e_{\alpha_{j_0}}$ belong to the same triangle, say $vv_1v_2$. It follows that $s(vv_3)=1$.

Let $A(v_1v_2)=\{\alpha:\alpha\notin \varphi(vv_1)\cup\varphi(vv_2)\cup \varphi(v_1v_2)\}$, and $r=|A(v_1v_2)|$. Then
\begin{align*}
r &\ge t-(m(vv_1)-s(vv_1))-(m(vv_2)-s(vv_2))-m(v_1v_2)\\
& =t+s(vv_1)+s(vv_2)-(m(vv_1)+m(vv_2)+m(v_1v_2))\ge t+s(vv_1)+s(vv_2)-\tau(G)\\
&\ge t+s(vv_1)+s(vv_2)-(s+t-2)=s(vv_1)+s(vv_2)-s+2=s-1-s+2=1.
\end{align*}

Let $\alpha\in A(v_1v_2)$. Consider the subgraph $H_{c_2,\alpha}$ induced by edges colored with $c_2$ and $\alpha$. The components are path and even cycles, and a component, say $P$, contains $v_1$. If $P$ contains $vv_{3}$, then $\varphi(vv_3)=\alpha$ and there is an edge $e^*$ incident with $v_{3}$ colored with $c_2$. Thus, we can use $\alpha_{1}$ to recolor $e^*$, and interchange the colors $c_2$ and $\alpha$ on $P$. If $P$ dose not contain $vv_3$, then we interchange the colors $c_2$ and $\alpha$ on $P$ directly. So the edge incident with $v_1$ that is colored by $c_2$ can be recolored with $\alpha_{3}$. Thus, $e_2$ can be colored with $c_2$. Thus we obtain an $(s+t-2)$-coloring of $E(G)$, a contradiction.
\end{proof}

Theorem~\ref{th2} has been proved following Claim~\ref{c23}.  We assume that $|N(v)|\ge 3$ and $\ell\ge 2$.

\begin{claim}\label{c3}
 $s(vv_1)\le \ell$.
\end{claim}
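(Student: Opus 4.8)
The plan is to prove Claim~\ref{c3} by contradiction, assuming $s(vv_1)\ge \ell+1$, and showing that under this assumption we can nevertheless extend the edge-coloring $\varphi$ of $G'=G-S_v$ to a proper $(s+t-2)$-edge-coloring of $G$, contradicting that $G$ is a minimal counterexample (which requires $\chi'(G)=s+t-1$). The setup I would use mirrors the proofs of Claims~\ref{c1} and~\ref{c2}: since $s(vv_1)\ge\ell+1$, I can select the entire set $S_0=\{e_1,\ldots,e_{\ell+1}\}$ of uncolored edges to lie inside $E(vv_1)$, so every edge of $S_0$ shares the common endpoint $v_1$ in addition to $v$. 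Meanwhile the remaining $s-\ell-1$ edges of $S_v\setminus S_0=T_0$ are colored with the fresh colors $\alpha_1,\ldots,\alpha_{s-\ell-1}$, with one such edge placed in $E(vv_i)$ for as many distinct neighbors $v_i$ as possible.

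The key mechanism is that all of $S_0$ is concentrated at the single vertex $v_1$. I would try to color each $e_i\in S_0$ with a color $c_i\in\overline{\varphi}(v)$; the only obstruction is a conflicting edge $e_i'$ at the other endpoint $v_1$ already colored $c_i$. Because all the $e_i$ share $v_1$, the conflicting edges and the available colors can be analyzed jointly at $v_1$. I would build the same bipartite auxiliary graph $T$ between $S_0$ and $T_0$, putting an edge $e_ie_{\alpha_j}$ precisely when no edge colored $c_i$ is incident to the endpoints of both $e_i$ and $e_{\alpha_j}$, and then invoke Hall's Theorem to find a matching saturating $S_0$. The matching lets me color each $e_i$ with $c_i$ while recoloring the displaced edge $e_i'$ with the $\alpha$-color freed up by its matched partner. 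The degree bound I expect to establish is $d_T(e_i)\ge |T_0|-(\text{number of neighbors of }v_1\text{ that can block})\ge |T_0|-2$, and since $|T_0|=s-\ell-1\ge 2.5\ell+1\ge \ell+3$ when $\ell\ge2$, this gives $d_T(e_i)\ge\ell+1=|S_0|$, so Hall's condition holds.

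The main obstacle, and where the argument needs genuine care, is bounding how many elements of $T_0$ can be \emph{simultaneously} blocked for a given $e_i$ and, more importantly, ensuring the recolorings do not cascade into new conflicts. Since all edges of $S_0$ emanate from $v_1$, a color $c_i$ can appear at most once at $v_1$ (properness of $\varphi$), so the conflicting edge $e_i'$ is unique; the danger is that a single $e_{\alpha_j}$ lies in a triangle $vv_1v_j$ that blocks several of the $e_i$ at once, or that $e_i'$ coincides for different $i$. I would argue that because $e_1',\ldots,e_{\ell+1}'$ carry distinct colors $c_1,\ldots,c_{\ell+1}$ and all meet $v_1$, they are distinct edges, and each can interfere with $e_{\alpha_j}$ only through a shared endpoint, limiting the lost candidates per $e_i$ to a small constant absorbed by the slack $s-\ell-1\ge\ell+3$. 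The remaining subtlety is verifying that the matching-based simultaneous recoloring is consistent — i.e.\ that assigning $e_i$ the color $c_i$ and shifting $e_i'$ to the matched $\alpha$-color produces a genuinely proper coloring at $v_1$ — which follows because the matching uses each $\alpha_j$ at most once and each $c_i$ is placed at most once. Once Hall's condition is confirmed under $s(vv_1)\ge\ell+1$, the extension yields the desired contradiction, forcing $s(vv_1)\le\ell$.
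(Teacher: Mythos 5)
Your overall strategy (put all of $S_0$ inside $E(vv_1)$, build the auxiliary bipartite graph $T$, and find a matching saturating $S_0$ via Hall's Theorem) is the same opening move as the paper's, but the step on which your whole argument rests --- the degree bound $d_T(e_i)\ge |T_0|-2$ --- is false, and this is precisely the difficulty that makes the paper's proof of Claim~\ref{c3} long. The unique edge $e_i'$ colored $c_i$ at $v_1$ has a single other endpoint, say $v_j$; but $G$ is a multigraph, so $T_0$ may contain \emph{many} parallel edges in $E(vv_j)$ (and, when $s(vv_1)>\ell+1$, also leftover edges of $E(vv_1)$, all of which are trivially blocked since $e_i'$ is incident to $v_1$). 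Every one of these is a non-neighbor of $e_i$ in $T$, so the number of ``lost candidates'' is not a small constant: it is $|T_0\cap S(vv_1)|+|T_0\cap S(vv_j)|$, and the honest bound is only $d_T(e_i)\ge \sum_{i\ne 1,j}s(vv_i)$. Since Claim~\ref{c1} forces $|N(v)|\le \ell+2$, the $s-\ell-1\ge 2.5\ell+1$ edges of $T_0$ are crowded onto at most $\ell+1$ neighbors other than $v_1$, so some $v_j$ necessarily carries a large share; e.g.\ with $|N(v)|=3$, $m(vv_3)=1$ and $s(vv_1)=s(vv_2)=(s-1)/2$, one gets $d_T(e_i)\ge 1$ only, far below the $\ell+1$ that Hall's condition requires. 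Your claim that the blockers are ``limited to a small constant absorbed by the slack'' is exactly where the argument breaks.

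Because Hall's condition can genuinely fail here, the paper does not stop at the matching argument: it takes a maximum matching, observes that all unmatched edges of $S_0$ are blocked through a \emph{single} common neighbor $v_j$ (since each $e_i'$ is the unique $c_i$-colored edge at $v_1$), and then rescues the remaining $q$ uncolored edges by Kempe-chain exchanges $H_{c_i,\alpha}$ using colors $\alpha\in A(v_1v_j)$ that are absent from $E(v)\cup E(v_1v_j)$. The contradiction finally comes not from Hall but from a counting argument: if too few such colors $\alpha$ exist, then $m(v_1v_j)$ is so large that $\tau(G)=m(vv_1)+m(vv_j)+m(v_1v_j)>s+t-1$, contradicting $\omega'(G)<\chi'(G)$. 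None of this machinery --- the identification of $v_j$, the sets $A(v_1v_j)$, the Kempe chains, or the $\tau(G)$ bound --- appears in your proposal, so the proof as proposed does not go through.
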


\begin{proof}
Assume that $s(vv_1)\geq \ell+1$. Let $S_0\subseteq S(vv_1)$.
For $1\le i\le d-1$, we pick one edge in $E(vv_{i+1})$ and color it with $\alpha_{i}$, and denote $e_{\alpha_i}$ this edge. Then we color the uncolored edges of $S_v-S_0$ with $\alpha_{d},\ldots ,\alpha_{s-\ell-1}$ arbitrarily and denote $e_{\alpha_i}$ the edge of color $\alpha_i$.

We try to color $S_0$ with $\{c_1,\ldots ,c_{\ell+1}\}\in \overline{\varphi}(v)$.  For each $i$, if $c_i$ is available at $v_1$, then we color  one edge in $S_0$ with $c_i$.  Assume that $k$ colors $c_{p+1},\ldots ,c_{\ell+1}$ have been used on edges $e_{p+i}, \ldots, e_{\ell+1}$ for $p=\ell+1-k$, respectively.  It follows that $d(v_1)\geq m(vv_1)+p$.

For $1\le i\le p$, we try to use $c_i$ to color $e_i$. As $e_i$ cannot be colored with $c_i$, the color $c_i$ appears on an edge, say $e_i'$, that is adjacent to $e_i$. 
We construct a bipartite graph $T$ with part $S'_0=\{e_1,\ldots ,e_{p}\}$ and $T'_0=\{e_{\alpha_1},\ldots ,e_{\alpha_{s-\ell-1}}\}$ such that $e_ie_{\alpha_j}\in E(T)$ if and only if no edge colored $c_i$ is between the endpoints of $e_i$ and $e_{\alpha_j}$.

Let $(S''_0,T''_0)$ be the bipartite graph obtained  from $T$ by deleting the endpoints of a maximum matching of $(S'_0,T'_0)$. Let $S''_0=\{e_1,\ldots ,e_q\}\subseteq S'_0$ and $T''_0=\{e_{\alpha_1},\ldots ,e_{\alpha_{q'}}\}\subseteq T'_0$, where $q'=s-2\ell+q+k-2$. Then there are no matching edges between $S_0''$ and $T_0''$.  It follows that the edge colored $c_i$ is between $v_1$ and the endpoint of every edge in $S''_0$ for $i\in[q]$.  That is, there is a vertex $v_j$ for $j\in\{2,\ldots,d\}$ such that all colors of $\{c_1,\ldots ,c_q\}$ appear on the edges between $v_1$ and $v_j$.

Let $\ell'=\sum\limits_{i\neq1,j} s(vv_i)$. Then $\ell'= p-q$ due to the choice of $(S''_0,T''_0)$. It follows that $$p-q=\ell'=\sum_{i\neq1,j}s(vv_i)=(s-\ell-1)-(s(vv_1)-\ell-1)-s(vv_j).$$

As $c_1$ is on an edge between $v_1$ and $v_j$, the neighbors of $e_1$ in $T_0'$ are $\cup_{i\not=1,j}S(vv_i)$.  Since $e_1$ cannot be matched in $T_0'$, $e_1$ has at most $\ell$ neighbors in $T_0'$. Therefore
$$\ell\geq d_{T'_0}(e_1)=\sum_{i\neq1,j}s(vv_i)=(s-\ell-1)-(s(vv_1)-\ell-1)-s(vv_j)=\ell'=p-q.$$

As $s\ge 3.5\ell+2$, $s(vv_1)+s(vv_j)=s-\ell'\ge s-\ell\ge 2.5\ell+2$. Since $s(vv_1)\ge s(vv_j)$, we have $s(vv_1)\ge \ell+2\ge \ell'+2$.  From the way how $S_v$ is chosen, all the edges in $\cup_{i\not=1,j} E(vv_i)$ are in $S_v$.  So $s(vv_i)=m(vv_i)$ for each $i\not=1,j$.  That is,  $$\sum_{i\neq1,j}m(vv_i)=\ell'.$$ It follows that
$$\sum_{i=1}^{d}m(vv_i)=m(vv_1)+m(vv_j)+\ell'=d(v)=\Delta(G)\ge d(v_1)\ge m(vv_1)+p. $$
Then we have $m(vv_j)\geq p-\ell'= q$. Note that if $s(vv_1)\ge s(vv_j)+2$, then $s(vv_j)=m(vv_j)\ge q$ by the choice of $S_v$, and if $s(vv_1)\le s(vv_j)+1$, then $s(vv_j)\ge s(vv_1)-1\ge \ell+1=p+k\ge q$. Therefore, we always have  $s(vv_j)\geq q.$ We may assume that $\alpha_1', \ldots, \alpha_q'$ are colored appeared in some edges in $S(vv_j)$.

Let $A(v_1v_j)$ be the set of colors that do not appear at edges in $E(v)\cup E(v_1v_j)$. Let $r=|A(v_1v_j)|$.
For each $\alpha\in A(v_1v_j)$ and $i\in [q]$, the subgraph $H_{c_i,\alpha}$ formed by edges colored with $c_i$ or $\alpha$ contains a component $P$ with $v_1\in P$. Clearly, $P$ is a path or an even cycle.  Interchange the colors $c_i$ and $\alpha$ on $P$, and then recolor the edge at $v_1$ of color $c_i$ with $\alpha_{i}$, for some $i\in[q']$, we can now color $e_i$ with $c_i$. Since $s(vv_j)\geq q$, we can follow this procedure to color every $e_i$ with $c_i$ for $i\in [q]$ as long as $r\ge q$. Thus, we may assume that $r<q$. From $r\ge |\overline{\varphi}(v)|-k-m(v_1v_j)$, we have $$m(v_1v_j)\geq |\overline{\varphi}(v)|-k-r\ge t+\ell-1-(d(v)-s)-k-r.$$ From $\ell+1=p+k=q+\ell'+k>r+\ell'+k$, we have
\begin{align*}
\tau(G)&= m(vv_1)+m(vv_j)+m(v_1v_j)\geq d(v)-\ell'+m(v_1v_j)\\
&\ge d(v)-\ell'+t+s-1+\ell-k-r-d(v)=t+s-1+\ell-\ell'-k-r>t+s-1,
\end{align*}
a contradiction.
\end{proof}

For each partition of $S_v$ into $S_0$ and $T_0$ with $|S_0|=\ell+1$, we construct a bipartite graph $T$ with part $S_0=\{e_1,\ldots ,e_{\ell+1}\}$ and $T_0=\{e_{\alpha_1},\ldots ,e_{\alpha_{s-\ell-1}}\}$ such that $e_ie_{\alpha_j}\in E(T)$ if and only if no edge colored with $c_i$ is incident with the endpoints of $e_i$ and $e_{\alpha_j}$. Thus, if we can find a matching in $T$ that saturates $S_0$, then we can color $e_i$ with $c_i$ and recolor the edge at endpoints of $e_i$ that has color $c_i$ with the color of the edge that is matched with $e_i$ in $T$, which would yield an $(s+t-2)$-coloring of $E(G)$, a contradiction. Therefore, we may assume that no matter how to select $S_0$ from $S_v$, there is no matching saturating $S_0$ in the bipartite graph $T$. From Hall's Theorem, $d_T(e_i)<\ell+1$. Since  for each $i\in [\ell+1]$,  $$d_T(e_i)\ge |S_v|-|S_0|-\max\{|T_0\cap S(vv_i)|+|T_0\cap S(vv_j)|: 1\le i<j\le d\},$$
and by Hall's Theorem, $d_T(e_i)<\ell+1$ for some $i\in [\ell+1]$,
we know that for each partition of $S_v$,
\begin{equation}\label{eq0}
\max\{|T_0\cap S(vv_i)|+|T_0\cap S(vv_j)|: 1\le i<j\le d\}\ge |S_v|-|S_0|-\ell=s-2\ell-1.
\end{equation}

We now show that we can always find a partition of $S_v$ to fail \eqref{eq0}.  Here is how we choose $S_0$:
If $s(vv_1)=\ell$, then let $S_0=S(vv_1)$. Otherwise, there is an integer $i_0>1$ such that
\begin{center}
$\ell\le\sum\limits_{i=1}^{i_0}s(vv_i)$ and $\sum\limits_{i=1}^{i_0-1}s(vv_i)\le\ell-1.$
\end{center}
By the choice of $S_v$, $\sum\limits_{i=1}^{i_0}s(vv_i)<2\ell.$ Thus, we can choose $S_0$ of size $\ell$ so that $|S_0\cap S(vv_i)|\ge\lfloor\frac{s(vv_i)}2\rfloor$ for $i\in[i_0]$ and $S_0\cap S(vv_i)=\emptyset$ for $i\in\{i_0+1,\ldots,d\}$.


Since $s\ge 3.5\ell+2$ and $s(vv_1)\le \ell$ by Claim~\ref{c2}, $d=|N(v)|\ge 4$ and $|S_v\cap E(vv_i)|\ge 1$ for at least four vertices $v_i\in N(v)$. Clearly, for $1\le i<j\le d$,  $$|T_0\cap S(vv_i)|+|T_0\cap S(vv_j)|\le 0.5\ell+\ell=1.5\ell<1.5\ell+1\le s-2\ell-1,$$ a contradiction to \eqref{eq0}.

\bigskip

\section{Final Remarks}\label{sect2}

An enhanced version of the Erd\H{o}s-Lov\'asz Tihany Conjecture would be the following.  For each integer $\ell$, there exists an integer $f(\ell)$, such that for every graph $G$ with $\chi(G) > \omega(G)$ and any two integers $s,t\geq f(\ell)$ with $s+t =\chi(G)+1$, there is a partition $(S,T)$ of the vertex set $V(G)$ such that $\chi(G[S])\geq s$ and $\chi(G[T])\geq t+\ell$.  In Theorem~\ref{th1}, we obtained $f(\ell)\le 3.5\ell+2$ for line graphs of multigraphs when $\ell\ge 0$. One immediate question is to determine $f(\ell)$ for line graphs.  It is also interesting to know if $f(\ell)$ exists for other classes of graphs. 


\end{document}